\documentclass[12pt,reqno]{amsart}

\usepackage[shortlabels]{enumitem}
\usepackage{esint}
\usepackage{physics}
\usepackage{relsize}
\usepackage[backref]{hyperref}
\usepackage{mathtools}
\usepackage{amsfonts}
\usepackage[all]{xy}
\usepackage{geometry}
\geometry{margin=2.5cm}

\newtheorem{theorem}{Theorem}[section]
\newtheorem{lemma}[theorem]{Lemma}
\newtheorem{definition}[theorem]{Definition}

\theoremstyle{corollary}

\theoremstyle{conjecture}

\theoremstyle{problem}
\newtheorem{problem}{Problem}

\theoremstyle{assumption}

\theoremstyle{proposition}
\newtheorem{proposition}[theorem]{Proposition}
\theoremstyle{remark}
\newtheorem{remark}[theorem]{Remark}
\numberwithin{equation}{section}

\newcommand{\C}{\mathbb{C}}

\newcommand{\CP}{\mathbb{CP}}

\usepackage{xcolor}


\everymath{\displaystyle}

\begin{document}

\title{On the generalized numerical criterion}

\author{SEAN TIMOTHY PAUL}
\address{Department of Mathematics, University of Wisconsin-Madison, Madison WI, 53706}
\email{stpaul@wisc.edu}

\author{Song Sun}

\address{Department of Mathematics, University of California, Berkeley, CA 94720 } 
\email{sosun@berkeley.edu}

\author{Junsheng Zhang}
\address{Department of Mathematics, University of California, Berkeley, CA 94720 } 
\email{jszhang@berkeley.edu}

\thanks{S.S. and J. Z. are partially supported by the Simons Collaboration Grant in Special Holonomy and  NSF grant DMS-2304692.}


\maketitle
\begin{abstract}
    In this note, we give examples that demonstrate a negative answer to the generalized numerical criterion problem for pairs.
\end{abstract}
\section{Introduction}

 Let $G$ be a complex reductive algebraic group and let $V$ and $W$ be two finite-dimensional rational representations of $G$. Suppose that we have two non-zero vectors $v\in V$ and $w\in W$. We consider the group orbits 
\begin{equation*}
	\mathcal{O}_{(v, w)}:=G.[(v, w)] \subset \mathbb{P}(V \oplus W)
	\end{equation*}
	\begin{equation*}
	\mathcal{O}_v:=G.[(v, 0)] \subset \mathbb{P}(V \oplus\{0\}),	
\end{equation*}and  denote by $\overline{\mathcal O}_{(v,w)}$, $\overline{\mathcal{O}}_v$ their closures in the corresponding projective spaces. Following  \cite{Paul09} we have
\begin{definition}We say the pair $(v, w)$ is {semistable} if $\overline{\mathcal{O}}_{(v, w)} \cap \overline{\mathcal{O}}_v=\emptyset$. 
\end{definition}Observe that when $V$ is the trivial one dimensional representation $\mathbb{C}$ of $G$ and $v=1$ we have that $(1,w)$ is semistable if and only if $w$ is semistable in the usual sense of Geometric Invariant Theory (G.I.T.), namely the \emph{affine} orbit closure $\overline{G. w}\subset W$ does not contain $0$. Recall that in classical G.I.T., we have the \emph{Hilbert-Mumford numerical criterion} to detect semistability. This criterion establishes the equivalence among the following conditions: 

\begin{itemize}
    \item $0\in\overline{G. w}$.
    \item There exists a maximal algebraic torus $T$ of $G$ such that $0\in \overline{T. w}$.
    \item There exists a one parameter subgroup $\lambda$ of $G$ such that $\lim_{t\rightarrow 0}\lambda(t). w=0$.
    \item There exists a maximal algebraic torus $T$ of $G$ such that $0\notin \mathcal{N}_T(w)$.
\end{itemize} 
 Here
for a given maximal algebraic torus $T$ of $G$, and a nonzero vector $w$ in a rational $G$ representation $W$, $\mathcal{N}_T(w)\subset Lie(T)^*$ denotes the \emph{weight polytope} of $w$,

In \cite{Paul09} it was observed that the Hilbert-Mumford numerical condition for semistability admits the following generalization to pairs:
 \begin{definition}
 We say the pair $(v, w)$ is  {numerically semistable} if $\mathcal N_T(v)\subset \mathcal N_T(w)$ for all maximal tori $T$. \end{definition}
 Alternatively one can fix a maximal torus $T_0$, then $(v,w)$ is numerically semistable if and only if $\mathcal N_{T_0}(g.v)\subset \mathcal N_{T_0}(g.w)$ for all $g\in G$.
It is clear that a semistable pair $(v,w)$ is always numerically semistable. A natural question is whether the converse holds.
\begin{problem}[\cite{Paul21} Generalized numerical criterion]\label{problem1} 

\

\centering

	If $(v,w)$ is numerically semistable, does it follow that $(v,w)$ is semistable?
 
\end{problem}

The answer is known to be affirmative in  special cases, such as when $V$ is the trivial representation, or when $W=\operatorname{Sym}^2(V)$ and $w=v^{\otimes 2}$. Indeed, in these cases the question reduces to the {Hilbert-Mumford numerical criterion} above. Motivated by the work of \cite{Paul09}, it is tempting to hope that a positive answer would hold for general pairs. However, in this note we clarify that the answer is \emph{negative} in general, and therefore further conditions (at present unknown to the authors) must be imposed in order to ``numerically" characterize the semistability of pairs.

\begin{theorem}\label{t:main}We have
\begin{enumerate}
      \item [(1).] For $G=SL(2,\mathbb C)$, when $W$ is irreducible, the generalized numerical criterion holds.
    \item [(2).] For $G=SL(2,\mathbb C)$, there are examples where $W$ is reducible and the generalized numerical criterion fails.
        \item [(3).] For $G=SL(3,\mathbb C)$, there are examples where $V$ and $W$ are both irreducible and homogeneous but the generalized numerical criterion fails.
\end{enumerate}
\end{theorem}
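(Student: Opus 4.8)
\emph{Setup common to all three parts.} The plan is to pass to the analytic picture. Fix a maximal compact subgroup $K\subset G$ and $K$-invariant Hermitian norms on $V$ and $W$, and let $X$ be the associated symmetric space. Since $G$ preserves $V\oplus\{0\}$, the closure $\overline{\mathcal O}_v$ lies in $\mathbb P(V)$, and one checks readily that $(v,w)$ fails to be semistable precisely when $\inf_{g\in G}\|g.w\|/\|g.v\|=0$: a sequence $g_k.[(v,w)]$ can accumulate on $\overline{\mathcal O}_v$ only through $\mathbb P(V)$, and any limit point there is a limit of the $[g_k.v]$. Equivalently, setting $F(g):=\log\|g.w\|-\log\|g.v\|$ (a left $K$-invariant function, hence a function on $X$), non-semistability means $\inf_X F=-\infty$. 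On the other hand, the support-function description of polytope containment shows that numerical semistability is equivalent to $F$ being bounded below along every geodesic ray in $X$: the asymptotic slope of $F$ along such a ray equals $\max_{\mu\in\mathcal N_T(w)}\langle\mu,\xi\rangle-\max_{\mu\in\mathcal N_T(v)}\langle\mu,\xi\rangle$ for the appropriate torus $T$ and direction $\xi$, and as the ray varies these quantities recover all of the inclusions $\mathcal N_T(v)\subseteq\mathcal N_T(w)$. So the whole theorem asks: when does ``$F$ bounded below on every ray'' force ``$F$ bounded below''? Now $\log\|g.w\|$ and $\log\|g.v\|$ are each geodesically convex, so $F$ is a \emph{difference of convex functions}; for a genuinely convex function the two conditions coincide, but for a difference of convex functions they need not, and the task is to decide in which situations the convexity-type rigidity survives.

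\emph{Part (1): $G=SL(2,\mathbb C)$, $W$ irreducible.} I would use the Cartan decomposition $g=k\,a(t)\,l$ with $a(t)=\operatorname{diag}(t,t^{-1})$, $k,l\in SU(2)$, and the $SU(2)$-invariance of the norms, so that $F$ becomes $\tfrac12\log R_l(t)$ with $R_l(t)=\|a(t)l.w\|^2/\|a(t)l.v\|^2$, and reduce to proving $\inf_{l\in SU(2)}\inf_{t>0}R_l(t)>0$. For each fixed $l$ the inner infimum is positive: the logarithmic derivative of $R_l(e^u)$ is twice the difference of the $T_0$-barycentres of $l.w$ and $l.v$ at scale $u$, which numerical semistability forces to be $\le 0$ near $u=-\infty$ and $\ge 0$ near $u=+\infty$, so the infimum is attained at a finite $u_*(l)$. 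The real content is uniformity over the compact group $SU(2)$: the only way $\inf_t R_l(t)$ can fail to be bounded below is if $u_*(l)\to\pm\infty$ as $l$ approaches a locus where a $T_0$-weight coefficient of $l.w$ degenerates. Here one uses that $W=\operatorname{Sym}^d(\mathbb C^2)$ is the space of binary forms with one-dimensional weight spaces, so $\max_{\mu\in\mathcal N_{T_0}(l.w)}\langle\mu,\cdot\rangle=d-2\operatorname{ord}_{P(l)}(w)$, and near a root of multiplicity $m$ the relevant low-order coefficients of $l.w$ vanish at the precise rates dictated by $m$ (a \L ojasiewicz-type estimate coming from root clustering); numerical semistability forces the matching coefficients of $l.v$ to vanish at least as fast, which is exactly what prevents $R_l(t)$ from degrading along such a path. \textbf{This uniformization is the main obstacle}, and it is precisely what breaks once $W$ is reducible.

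\emph{Parts (2) and (3): the counterexamples.} These I would make explicit, engineered from the failure of the above uniformity. For (2), take $G=SL(2,\mathbb C)$ and a suitable \emph{reducible} $W=\bigoplus_p\operatorname{Sym}^{d_p}$: with several summands the numerical-semistability inequality is a maximum over the summands and no longer pins down the vanishing rates of the weight coefficients of $w$ as in Part (1), so one can choose the root configurations of the components of $v$ and $w$ so that (a) $\mathcal N_{T_0}(g.v)\subseteq\mathcal N_{T_0}(g.w)$ holds for \emph{every} $g\in G$ — which, via the binary-form/root-multiplicity dictionary and the $SU(2)$-symmetry, reduces to a finite check — while (b) there is a sequence $g_k=k_k\,a(t_k)\,l_k$ with $t_k\to\infty$ and $l_k\to l_\infty$ along which the leading weight components of $l_k.v$ and $l_k.w$ both degenerate but with $\|(l_k.w)_{\max}\|/\|(l_k.v)_{\max}\|\to 0$, forcing $\|g_k.w\|/\|g_k.v\|\to 0$ by a genuinely non-geodesic, rate-matched degeneration that no one-parameter subgroup detects. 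For (3), run the same scheme over $G=SL(3,\mathbb C)$ with $V$ and $W$ irreducible and homogeneous of a common degree — for instance low-dimensional Schur functors of $\mathbb C^3$, or the adjoint representation $\mathfrak{sl}_3$ — building $v$ and $w$ from explicit tensors/flags, verifying numerical semistability by using the Weyl group to reduce to the finitely many maximal tori (Weyl chambers) that matter, and then writing down the destabilizing sequence. \textbf{The recurring obstacle in (2)--(3) is that numerical semistability is a priori an infinite family of polytope inclusions}; the work is to engineer configurations symmetric enough that it becomes a finite verification, yet still genuinely unstable — that is, to open a controlled gap between the convex-hull (weight-polytope) data and the actual orbit-closure data.
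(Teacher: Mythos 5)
Your analytic setup is correct and equivalent to what the paper does: non-semistability of $(v,w)$ is $\inf_{g\in G}\|g.w\|/\|g.v\|=0$, and numerical semistability is non-negativity of the asymptotic slope of $F=\log\|g.w\|-\log\|g.v\|$ along every geodesic ray (the paper works instead with holomorphic arcs $g:\Delta^*\to G$ written as $\lambda(t)^{-p}k(t)$ and tracks leading exponents, but the two frameworks are interchangeable). The gaps are in what you do with this setup.

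In Part (1) the entire weight of the argument rests on the sentence ``numerical semistability forces the matching coefficients of $l.v$ to vanish at least as fast'' as those of $l.w$, and that sentence is false. In your own root-multiplicity dictionary, numerical semistability for $V=\operatorname{Sym}^n$, $W=\operatorname{Sym}^m$ says only that $\operatorname{ord}_P(w)\le \operatorname{ord}_P(v)+(m-n)/2$ for every $P\in\mathbb P^1$; when $m>n$ one can perfectly well have $\operatorname{ord}_P(w)>\operatorname{ord}_P(v)$, in which case the extremal weight coefficients of $l.w$ vanish \emph{strictly faster} than the matching coefficients of $l.v$ as $l$ degenerates toward the locus associated to $P$. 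What saves the irreducible case is not a pointwise rate comparison but a quantitative trade-off between the vanishing rates and the weights that survive after the degeneration; in the paper this is precisely the final step, namely that the quadratic $Q(x)=(n-x)(m-x-n+\gamma_1)-(n-\gamma_1)(m-n)$, having roots at $\gamma_1$ and $m$, satisfies $Q(\gamma_2)\le 0$ for $\gamma_2\in[\gamma_1,m]$. Nothing in your proposed mechanism produces such an inequality, and nothing in it uses irreducibility of $W$ beyond one-dimensionality of the weight spaces of each summand -- so if it worked as stated it would equally ``prove'' Part (1) for reducible $W$, where the statement is false. The uniformization step is not merely ``the main obstacle''; it is unresolved.

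For Parts (2) and (3) you exhibit no pair $(v,w)$ at all, only a specification of what a counterexample should achieve; since these are existence statements, this is a fatal gap rather than a routine omission. Two of your guiding claims are also incorrect. First, verifying numerical semistability does not ``reduce to a finite check via the Weyl group'': the condition is $\mathcal N_{T_0}(g.v)\subset\mathcal N_{T_0}(g.w)$ for \emph{all} $g\in G$, a continuum of conditions, and the Weyl group only identifies tori conjugate inside the normalizer of one of them. This is exactly why the paper's Part (3) is delicate: it takes $V=\Gamma_{0,k+1}$, $W=\Gamma_{2k,1}$ with $k\ge 12$, a hand-built vector $w_0$ whose nonzero weights sit just inside a critical supporting hyperplane, and $w=w_0+w'$ with $w'$ \emph{generic} in an auxiliary weight subspace $W'$; numerical semistability is then obtained by a dimension count (two codimension-$\ge 9$ lemmas for the relevant projection maps, stratified over the Gauss decomposition of $A\in SL(3,\C)$), not by any finite verification. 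Second, your suggested candidates ($\mathfrak{sl}_3$ or low-degree Schur functors of $\C^3$) are unlikely to work, since the construction in the paper genuinely uses $k$ large. Part (2), by contrast, admits a two-line example ($v=e_1e_2^{n-1}$, $w=e_1^n+\tilde e_2^{\,n-2}$ in $\operatorname{Sym}^n\C^2\oplus\operatorname{Sym}^{n-2}\C^2$, destabilized by $\lambda(t)\exp(tE_{21})$), but you would still need to write one down and carry out the case check over all of $G$.
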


\begin{remark}
      A positive answer to Problem \ref{problem1} has been proposed in several papers in the last decade  but unfortunately the proofs all contain gaps. To avoid possibly further confusing researchers on this topic, we list all the relevant papers that we are aware of which have made such attempts (\cite{Paul12}, \cite{Paul13}, \cite{Tian2017}, \cite{Tian2018}, \cite[Theorem 5.6]{BHJ}, \cite{BHJ-Erratum}).  We mention that in all these papers neither the irreducibility of $W$ nor the homogeneity have been used. 
\end{remark}

We now explain the consequence of this result in K\"ahler geometry. Recall that for a projective manifold $\iota: X\hookrightarrow\CP^N$, one can associate a space $\mathcal B$ of Bergman metrics on $X$. These metrics  are given by $\iota^*F^*\omega_{FS}$, where $\omega_{FS}$ is the Fubini-Study metric and $F$ is a projective linear transformation. In \cite{Paul09} it is shown that  the Mabuchi functional is uniformly bounded below on $\mathcal B$ if and only if the pair $(R_X^{\deg (\Delta_X)}, \Delta_X^{\deg (R_X)})$ is semistable. Here $R_X$ is the \emph{resultant} of $X$ and $\Delta_X$ is the \emph{hyperdiscriminant} of $X$; both can be viewed as elements in appropriate rational representations of $SL(N+1, \C)$. It is known that the corresponding representations are both irreducible and homogeneous. There was an expectation in the field, see for example the above cited papers, that the generalized numerical criterion holds for the pair $(R_X^{\deg (\Delta_X)}, \Delta_X^{\deg (R_X)})$. Such a result would establish a connection between K-semistability and CM-semistability, as well as between uniform K-stability and a uniform version of CM-stability, as discussed in \cite{BHJ}. However, the above (3) reveals that this may not be the case - at least it can not be established through a general argument and even if it can be proved, it must depend crucially on very specific knowledge of $R_X$ and $\Delta_X$. It is an interesting question whether one can find counterexamples in this particular context. We mention that in the case of Fano manifolds, we do know that K-stability implies CM-stability, but this only  follows a posteriori from the solution of the K\"ahler-Einstein problem \cite{CDS1, CDS2, CDS3}. 

We should point out that the notion of a semistable pair is equivalent to the condition of \emph{dominance} $(V\oplus W,(v,w))\succ (W,w)$ in the representation theory of reductive complex linear algebraic groups.  We refer the reader to \cite{smirnov05} for the precise definition of dominance. In \cite{smirnov05} several necessary conditions for semistability of pairs are provided. One that is particularly relevant for us asserts that if $(v,w)$ is semistable then \emph{the number of closed orbits in $\overline{\mathcal O}_{(v,w)}$ and $\overline{\mathcal{O}}_w$ coincide.} In the case where $V$ and $W$ are irreducible, homogeneous, non-isomorphic and the highest weight of $V$ is dominated by that of $W$ (in the sense of partitions \cite{Paul09}) it follows from the Borel fixed point theorem that $(v,w)$ is semistable iff there is a \textbf{unique} closed orbit in $\overline{\mathcal O}_{(v,w)}$. 

\section{The case of $SL(2, \mathbb C)$}
In this section, we suppose $G=SL(2,\mathbb C)$. Denote by $U=\C^2$ the standard representation of $G$ with standard basis $\{e_1, e_2\}$.  Denote by $T_0$ the standard torus in $G$ given by\begin{equation*}
 	T_0=\left\{\lambda(t)\equiv\begin{pmatrix}
 		t&0\\
 		0&t^{-1}
 	\end{pmatrix}: t\in \mathbb C^*\right\}.
 	 \end{equation*}
 	 For simplicity of notation, we will denote $\mathcal N(v)\equiv \mathcal N_{T_0}(v)$ the weight polytope of an element $v$ in a representation of $G$. In general  $\mathcal N(v)$ is naturally a closed interval in $\mathbb R$. 
A non-trivial irreducible representation of $G$ is of the form $\operatorname{Sym}^k\C^2$ for some positive integer $k$. It is easy to see that to study Problem \ref{problem1} without loss of generality we may always assume $V$ is irreducible so is of the form $V=\operatorname{Sym}^n\C^2$.

\subsection{Irreducible case}  
  
Here we prove Part (1) of Theorem \ref{t:main}. As we assume $W$ is irreducible, it follows that $W=\operatorname{Sym}^m\C^2$ for some positive integer $m$. 
 Suppose $(v, w)\in V\oplus W$ is numerically semistable but not semistable, we will draw a contradiction.
 We can write 
 $$v=\sum_{i=\alpha_1}^{\beta_1}c_ie_1^{i}e_2^{n-i},\quad w=\sum_{i=\alpha_2}^{\beta_2} d_ie_1^ie_2^{m-i}$$ for some $0\leq \alpha_1\leq \beta_1\leq n$,  $0\leq \alpha_2\leq\beta_2\leq m$ and $c_{\alpha_1}c_{\beta_1}d_{\alpha_2}d_{\beta_2}\neq 0$.
By the  numerical semistability of $(v,w)$ we have  $\mathcal N(v)\subset \mathcal N(w)$, which implies 
\begin{equation}\label{n-stability}
	\begin{aligned}
		&2\alpha_1-n\geq 2\alpha_2-m\\
		& n-2\beta_1\geq m-2\beta_2
	\end{aligned}
\end{equation}

Since $\overline{G.[(v, w)]}\cap \mathbb P(V\oplus \{0\})\neq \emptyset$,  one can find a holomorphic map $g: \Delta^* \rightarrow G$, where $\Delta^*$ denotes a small punctured disc in $\mathbb C$, such that $g(t).[(v,w)]$ approaches $\mathbb P(V\oplus \{0\})$ as $t\rightarrow 0$. Using the Cartan decomposition, without loss of generality we may assume 
$$g(t)=(\lambda(t))^{-p}k(t)$$ for some positive integer $p$ and $k(t)=\text{Id}+\epsilon(t)$ with $\epsilon(0)=0$. 
We can express the action of $\epsilon(t)$ as follows: 
 \begin{equation}\label{eq-action of epsilon}
     \epsilon(t).e_2=p(t)e_1+q(t)e_2,
 \end{equation}
 where $p(t)=O(t)$ and $q(t)=O(t)$. Since $p>0$, the leading order term of $g(t).v$ as $t\rightarrow 0$ arises from $g(t).(e_1^{\beta_1}e_2^{n-\beta_1})$. As $\mathcal N(v)\subset \mathcal N(w)$ and $g(t).[(v,w)]$ converges to $\mathbb P(V\oplus \{0\})$, $p(t)$ can not be identically zero. Therefore in \eqref{eq-action of epsilon} we can write  $p(t)=ct^{\tau}+o(t^{\tau})$ for $c\neq 0$.  Note that 
\begin{equation*}(\lambda(t))^{-p}.(e_1^{\beta_1} (e_2+t^\tau e_1)^{n-\beta_1})=t^{(n-2\beta_1)p}\sum_{j=0}^{n-\beta_1}t^{(\tau-2p) j}\binom{n-\beta_1}{j} e_1^{\beta_1+j}e_2^{n-\beta_1-j}.
 	\end{equation*}
 Given that $n-2\beta_1\geq m-2\beta_2$ and $g(t).[(v,w)]$ converges to $\mathbb P(V\oplus \{0\})$ as $t\rightarrow 0$, the following conditions must be satisfied:
 \begin{equation}\label{inequality}
     pn-\tau(n-\beta_1)>(2\beta_2-m)p \text{ and }  pn-\tau(n-\beta_1)>pm-\tau(m-\beta_2).
 \end{equation}
Let $\gamma_1=2\beta_1-n\leq \gamma_2=2\beta_2-m$, then \eqref{inequality} is equivalent to 
\begin{equation*}
    n-\frac{\tau}{2p}(n-\gamma_1)>\gamma_2\text{ and }n-\frac{\tau}{2p}(n-\gamma_1)>m-\frac{\tau}{2p}(m-\gamma_2).
\end{equation*}
So in particular we must have $m>n>\gamma_2$, $m-\gamma_2-n+\gamma_1>0$, and
\begin{equation}\label{eq-weight comparison}
    \frac{n-\gamma_2}{n-\gamma_1}-\frac{m-n}{m-\gamma_2-n+\gamma_1}>0.
\end{equation}
Consider the quadratic function 
$$Q(x)=(n-x)(m-x-n+\gamma_1)-(n-\gamma_1)(m-n).$$
 It has roots at $x=m$ or $x=\gamma_1$. Since $\gamma_2\in [\gamma_1, m]$,  we have $Q(\gamma_2)\leq0$. This contradicts \eqref{eq-weight comparison}.

 \begin{remark}
 As one can observe	the proof hinges on the specific situation here. The argument fails either when $W$ is reducible or when $G$ has a higher rank.
 \end{remark}

\subsection{Reducible case}
Here we give the example claimed in Part (2) of Theorem \ref{t:main}.   Fix an integer $n\geq 3$.
	Let $V=\operatorname{\operatorname{Sym}}^n\mathbb C^2$ and $W=\operatorname{\operatorname{Sym}}^n\mathbb C^2\oplus \operatorname{\operatorname{Sym}}^{n-2}\mathbb C^2$. Let $\tilde e_1$, $\tilde e_2$ denote the standard basis of the second $\mathbb C^2$ in the definition of $W$. Then we choose $$v=e_1e_2^{n-1}\in V, \hspace{0.5cm} w=e_1^n+\tilde e_2^{n-2}\in W.$$ 
$\bullet$ $(v, w)$ is numerically semistable. 
This follows from a straightforward verification. Indeed let $g=\begin{pmatrix}
	a&b\\
	c&d
\end{pmatrix}\in G$. Then we have
\begin{equation*}
	g.v=(ae_1+ce_2)(be_1+de_2)^{n-1} \quad \text{and}\quad g.w=(ae_1+ce_2)^n+(b\tilde e_1+d\tilde e_2)^{n-2}.
\end{equation*} If $ac\neq 0$, then $\mathcal N(g.w)=[-n,n]$ and $\mathcal N(g.v)\subset \mathcal N(g.w)$.  If $a=0$, then $c\neq 0$, $b\neq0$ and we have that $\mathcal N(g.v)\subset\mathcal N(g.w)=[-n,n-2]$. If $c=0$, then $a\neq 0$, $d\neq 0$ and we have $\mathcal N(g.v)\subset\mathcal N(g.w)=[-n+2,n]$.

\

\noindent $\bullet$ $(v,w)$ is not semistable. To see this we choose
\begin{equation*}
	g(t)=\lambda(t)\begin{pmatrix}
		1&0\\
		t&1
	\end{pmatrix}=\begin{pmatrix}
		t&0\\
		1&t^{-1}
	\end{pmatrix}.
\end{equation*}
Then we have
\begin{equation*}
\begin{aligned}
	g(t).v=&t^{1-n}e_2^n+t^{2-n}e_1e_2^{n-1}\\
g(t).w=&(te_1+e_2)^n+t^{2-n}\tilde e_2,
\end{aligned}
	\end{equation*}
	and one can easily show that $$\lim_{t\rightarrow 0}g(t).[v,w]\in \mathbb P(V\oplus\{0\}).$$
	
	\
	
 As we observe in this example, the crucial point is that the weight  of $g(t).v$ is different from the weight of $\lambda(t).v$, due to the small perturbation $\begin{pmatrix}
1&0\\
t&1
\end{pmatrix}$. Furthermore, this change occurs at a different rate for $v$ and $w$. It is worth noting that the small perturbation can only increase the dominant weight. This is precisely the reason why the Hilbert-Mumford numerical criterion holds for classical GIT, i.e., when there is only one representation.	

More generally we can give a heuristic reason for possible failure of the generalized numerical criterion. Namely, the numerical semistability only gives the inclusion $\mathcal N(g.v)\subset \mathcal N(g.w)$ for all $g$ in the \emph{Lie group} $G$, but for semistability we need to control the effect caused by an infinitesimal perturbation, so one would like to have the polytope inclusion $$\mathcal N(A^k.v)\subset \operatorname{conv}\left( \bigcup_{i=0}^{k} \mathcal N(A^i.w)\right)$$ for elements $A$ in the \emph{Lie algebra} of $G$ and all nonnegative integers $k$, where the right-hand side denotes the convex hull of the union. However, there is no obvious reason why this should hold.
In the above example, for $A=\begin{pmatrix}
	0&0\\1&0
\end{pmatrix}$ one can see that 
\begin{equation*}
		 \mathcal N(A.v)=\{-n\}\quad \text{and}\quad \mathcal N(A^k.w)=\{n-2k\}.
\end{equation*}
So $\mathcal N(A.v)$ is not contained in $\mathcal N(A.w)$ but is contained in $\mathcal N(A^n.w)$. This discussion motivates our construction of the examples in the next section.

\section{irreducible homogeneous examples}
In this section, we produce examples claimed in Part (3) of Theorem \ref{t:main}. We follow the notations in \cite[Chapter 12]{FH}. Denote by $U=\mathbb C^3$  the standard representation of $SL(3,\C)$. Let $\{e_1, e_2, e_3\}$ and $\{e_1^*, e_2^*, e_3^*\}$ be the standard basis of $U$ and  $U^*$. Given a matrix $A=(a_{ij})\in SL(3,\C)$, our convention on the group action is 
\begin{equation*}
    A.e_j=\sum_i a_{ij}e_i, \hspace{0.2cm}  A.e_j^*=\sum_i b_{ij}e_i, \hspace{0.2cm} 
\end{equation*}
where $B=(b_{ij})$ is given by $(A^t)^{-1}$. In the Lie algebra $\mathfrak{sl}(3, \C)$ we  denote by $E_{ij}$  the $3\times 3$ matrix whose $(i,j)$ entry is 1 and whose other entries are all 0. Then we have $E_{ij}. e_j=e_i$.

We fix $T_0$ to be the standard diagonal maximal torus of $SL(3, \C)$. Let $L_i\in Lie(T_0)^*$ be the weight vector of $e_i$.  Then $\{L_1, L_2\}$ gives a basis of $Lie(T_0)^*$ and we can identify $Lie(T_0)^*$ with $\mathbb R^2$. Notice $L_3=-L_1-L_2$. Given a representation $V$ of $SL(3, \C)$ and a nonzero $v\in V$, we denote by $\mathcal N(v)\equiv \mathcal N_{T_0}(v)$, and we denote by $\mathcal N(V)$ the weight polytope $\mathcal N(v)$ of a generic vector $v\in V$.

An irreducible representation of $SL(3, \mathbb C)$ is of the form $\Gamma_{a,b}$ for $a, b\geq 0$, where $\Gamma_{a,b}$ is the sub-representation of $\operatorname{Sym}^{a}U\otimes \operatorname{Sym}^{b} U^*=\operatorname{Sym}^aU\otimes \operatorname{Sym}^b(\Lambda^2U)$  given by the kernel of the natural contraction map $$\iota: \operatorname{Sym}^aU\otimes \operatorname{Sym}^b U^*\rightarrow \operatorname{Sym}^{a-1}U\otimes \operatorname{Sym}^{b-1}U^*.$$ 
The weight polytope $\mathcal N(\Gamma_{a,b})$ is determined by the highest weight which is $aL_1-bL_3=(a+b)L_1+bL_2$. For another irreducible representation $\Gamma_{a', b'}$, it is easy to check that they are homogeneous (in the sense of Paul \cite{Paul09}) if and only if $a+2b=a'+2b'$. Notice that for homogeneous representations, we have $\mathcal N(\Gamma_{a,b})\subset\mathcal N(\Gamma_{a',b'})$ if and only if $a\leq a'$.

We consider the two homogeneous irreducible representations of $SL(3, \mathbb C)$ $$V\equiv \Gamma_{0, k+1}, \ \ \ \ \  \ W\equiv \Gamma_{2k, 1} \ \ \ \ \ \ \  (k\geq 12).$$  Then $\mathcal N(V)$ is the triangle with vertices $-(k+1)L_j(j=1,2,3)$ and $\mathcal N(W)$ is the hexagon with vertices $2kL_i-L_j(i\neq j)$. It is easy to see that $\mathcal N(V)\subset \mathcal N(W)$ and their boundaries only touch at the vertices of $\mathcal N(V)$. 
Fix a vector $$w_0=e_2^{2k-1}(e_2\otimes e_2^*-(2k+1)e_1\otimes e_1^*+e_3\otimes e_3^*)+e_1^{2k-1}(-\frac{1}{k}e_1\otimes e_1^*+e_2\otimes e_2^*+e_3\otimes e_3^*) \in W$$ and a vector  $$v=(e_2^*)^2(e_3^*)^{k-1}\in V.$$

Notice that the non-zero weights of $w_0$ are $\eta_{1}(w_0)=(2k-1)L_2$ and $\eta_2(w_0)=(2k-1)L_1$, $v$ has weight $\eta'=-2L_2-(k-1)L_3$. Moreover $E_{23}.w_0=0$ and $E_{23}.E_{23}.v=2(e_3^*)^{k+1}$ has weight $\eta''=-(k+1)L_3$.  We define a linear functional $l_3$ on $Lie(T_0)^*$ by setting
\begin{equation*}
    l_3\left(a_1 L_1+a_2 L_2+a_3 L_3\right)=a_1+ a_2-2 a_3.
\end{equation*}
Then we have \begin{equation}\label{e:weight comparison}l_3(\eta')=2k-4<l_3(\eta_1(w_0))=l_3(\eta_2(w_0))=2k-1<l_3(\eta'')=2k+2.
\end{equation}
This is the desired property that motivates our choice of $w_0$ and $v$. Recall that we have the weight decomposition $W=\oplus_{\sigma\in Lie(T_0)^*} W_{\sigma}$.
We define 
\begin{equation}
    W'=\bigoplus_{l_3(\sigma)\leq 2k-7}W_{\sigma}.
\end{equation}

\begin{proposition}\label{main result}
   Suppose $k\geq 12$, then the pair $(v, w)$ is numerically semistable but not semistable, where $w=w_0+w'$ for $w'$ a generic vector in $W'$.
\end{proposition}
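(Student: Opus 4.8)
\emph{Overview.} I would prove the two halves of the proposition separately, beginning with non-semistability, which is the more explicit claim and — perhaps surprisingly — needs neither the genericity of $w'$ nor even its presence.

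\emph{Non-semistability.} The plan is to produce a holomorphic curve $g(t)\colon\Delta^*\to SL(3,\mathbb C)$ with $g(t).[v,w]\to[(e_3^*)^{k+1},0]\in\mathbb P(V\oplus\{0\})$, in the spirit of the $SL(2)$ reducible example and the $l_3$-heuristic of Section 2. Set $\lambda_\xi(t)=\operatorname{diag}(t,t,t^{-2})$, so that $\lambda_\xi(t)$ acts on a weight-$\sigma$ vector by $t^{l_3(\sigma)}$, and take $g(t)=\lambda_\xi(t)^{-1}\exp(tE_{23})$. The relevant inputs are $E_{23}.w_0=0$, $E_{23}^2.v=2(e_3^*)^{k+1}$, and the chain \eqref{e:weight comparison}. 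Since $\exp(tE_{23})$ fixes $w_0$, the factor $\lambda_\xi(t)^{-1}$ carries the $w_0$-part of $g(t).w$ to $t^{-l_3(\eta_1(w_0))}w_0=t^{1-2k}w_0$; and because $w'$ is supported on weights $\sigma$ with $l_3(\sigma)\le 2k-7$, while $E_{23}$ raises $l_3$ by $3$ and $l_3$ is bounded above on $\mathcal N(W)$ by $l_3(\eta'')=2k+2$, a short order estimate shows every term of the $w'$-part of $g(t).w$ also has order $\ge t^{1-2k}$, and cannot cancel $w_0$ since its weights sit on a different $l_3$-level. On the other hand $\exp(tE_{23}).v=v-2t\,e_2^*(e_3^*)^k+t^2(e_3^*)^{k+1}$, and applying $\lambda_\xi(t)^{-1}$ turns the three summands into terms of orders $t^{4-2k}$, $t^{2-2k}$, and $t^{\,2-l_3(\eta'')}=t^{-2k}$; thus $g(t).v=t^{-2k}\big((e_3^*)^{k+1}+O(t)\big)$ and $g(t).w=t^{1-2k}\big(w_0+O(t)\big)$, the crucial point being that the gap $l_3(\eta'')-l_3(\eta_1(w_0))=3$ in \eqref{e:weight comparison} exceeds the exponent $2$ by which $E_{23}$ must be iterated on $v$ to reach weight $\eta''$. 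Hence $g(t).[v,w]\to[(e_3^*)^{k+1},0]$. Since $(e_3^*)^{k+1}$ is an extreme weight vector of $V=\operatorname{Sym}^{k+1}U^*$, this limit lies on the closed orbit of $\mathbb P(V\oplus\{0\})$, which is contained in $\overline{\mathcal O}_v$; therefore $\overline{\mathcal O}_{(v,w)}\cap\overline{\mathcal O}_v\neq\emptyset$ and $(v,w)$ is not semistable.

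\emph{Numerical semistability.} Here I would verify the equivalent condition $\mathcal N(g.v)\subseteq\mathcal N(g.w)$ for all $g\in SL(3,\mathbb C)$. Since $\mathcal N(g.v)\subseteq\mathcal N(V)$ and $\mathcal N(V)$ meets $\partial\mathcal N(W)$ only at its three vertices $-(k+1)L_j$, of which $-(k+1)L_1$ and $-(k+1)L_2$ satisfy $l_3=-(k+1)\le 2k-7$, the inclusion can fail only over the small sub-polygon $\mathcal N(V)\cap\{l_3>2k-7\}$; as $V$ has one-dimensional weight spaces, its vertices that do not lie on $\{l_3=2k-7\}$ are among the six weights of $V$ with third exponent $b_3\ge k-1$, occurring at the levels $l_3\in\{2k-4,\,2k-1,\,2k+2\}$. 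One then checks, for each such $\mu$ actually appearing in $g.v$, that $\mu\in\mathcal N(g.w)$: at the level $l_3(\mu)=l_3(\eta')=2k-4$ this is the point of the construction, the threshold being chosen so that $2k-4=\tfrac12\big((2k-1)+(2k-7)\big)$, which exhibits $\mu$ as the midpoint of a weight of $g.w_0$ (these sit at $l_3=2k-1$) and a weight carried by the generic $g.w'$ (present at $l_3=2k-7$); at the two higher levels, $\mu=-L_j-kL_3$ or $\mu=\eta''$, one reads the needed coefficient off $g.w_0$ directly. Genericity of $w'$ both rules out accidental cancellations in these identities and keeps $\mathcal N(g.w)\supseteq\mathcal N(W)\cap\{l_3\le 2k-7\}$, which absorbs all remaining weights of $g.v$.

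\emph{Main obstacle.} The second step is where the difficulty lies: unlike the one-line check at $g=e$ (as in the $SL(2)$ reducible example), controlling $\mathcal N(g.w)$ uniformly in $g$ cannot be reduced to it, because $gW'$ is no longer $T_0$-stable and $\mathcal N(g.w')$ need not equal $\mathcal N(W')$; one is forced into a case analysis over the loci in $SL(3,\mathbb C)$ where the extreme-weight coefficients of $g.w$ degenerate. The bound $k\ge 12$ serves only to keep the weight inequalities around \eqref{e:weight comparison} and the polytope geometry of $\mathcal N(V)\subset\mathcal N(W)$ simultaneously in force.
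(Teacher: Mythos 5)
Your non-semistability argument is correct and is essentially the paper's own degeneration: the paper uses $\lambda(t)=\operatorname{diag}(t^{-\mu},t^{-\mu},t^{2\mu})$ composed with $\exp(tE_{23})$, and your curve is the case $\mu=1$, with the same order estimates driven by \eqref{e:weight comparison}. (One small imprecision: the $w'$-part of $g(t).w$ can contribute at order exactly $t^{1-2k}$, from three applications of $E_{23}$ to a component at level $l_3=2k-7$ landing at $l_3=2k+2$, so the expansion is $t^{1-2k}\bigl(w_0+O(1)\bigr)$ rather than $w_0+O(t)$; this is harmless since all you need is that $g(t).w$ blows up strictly more slowly than $g(t).v$.)

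The gap is in the numerical semistability half, which is the real content of the proposition. You correctly identify the difficulty --- $\mathcal N(g.w')$ is not $\mathcal N(W')$ because $g$ does not preserve weight spaces --- but your proposed resolution does not close it. First, the claims that at levels $2k-1$ and $2k+2$ "one reads the needed coefficient off $g.w_0$ directly," and that a weight "carried by the generic $g.w'$" is "present at $l_3=2k-7$," are unjustified: the relevant extreme-weight coefficients of $g.w_0$ do vanish for various $g$ (the paper only falls back on the $w_0$-contribution in the degenerate Case 3 of Lemma \ref{step1}, and Lemma \ref{step2} is entirely about the rank of $w'\mapsto \Pi_{\beta\gamma\alpha}(A.w')$, i.e., about the $w'$-contribution), and which weights survive in $g.w'$ depends on both $g$ and $w'$. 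Second, and more fundamentally, "generic $w'$" cannot be invoked pointwise in $g$: for each fixed $g$ the bad locus in $W'$ is a proper subvariety, but the union over the $8$-dimensional family of all $g\in SL(3,\C)$ could a priori cover $W'$. The paper's essential idea, absent from your sketch, is quantitative: for every fixed $A$ the locus of $w'$ on which the projection of $A.(w_0+w')$ to the corner spaces $S'_{\alpha\beta\gamma}$ (resp.\ to $S_{\beta\gamma\alpha}$ when $A\in Y_\alpha$) vanishes has codimension at least $9>8=\dim_{\C} SL(3,\C)$ (Lemmas \ref{step1} and \ref{step2}, proved by exhibiting $\ge k-2$ independent rows via a Vandermonde computation --- this is where $k\ge 12$ is actually used). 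Consequently the incidence varieties $X_{\alpha\beta\gamma},Z_{\beta\gamma\alpha}\subset SL(3,\C)\times W'$ have dimension $<\dim W'$, so their projections to $W'$ are proper subvarieties and a single $w'$ works for all $g$ simultaneously. Without this codimension-versus-$\dim G$ count, the "case analysis over loci in $SL(3,\C)$" you propose does not yield a proof.
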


It is easy to see that $(v, w)$ is not semistable. For this we take $\lambda(t)=\operatorname{diag}(t^{-\mu},t^{-\mu},t^{2\mu})$ for $\mu\geq1$. Notice that  the composition of the action of $l$ many $E_{23}$'s on $w'\in W'$ lies in the direct sum $\bigoplus_{l_3(\sigma)\leq 2k-7+3l}W_\sigma$ for $l=1,2,3$. Then by \eqref{e:weight comparison}  as $t\rightarrow0$ we have 
$$\|\lambda(t). \exp(tE_{23}).w\|\sim |t|^{\min\{-(2k-1)\mu,3-(2k+2)\mu\}},$$
and $$
\|\lambda(t).\exp(tE_{23}).v\|\sim |t|^{2-(2k+2)\mu}.$$It follows that $(v, w)$ is not semistable.

The remaining part of this section is devoted to the proof of the fact that $(v, w)$ is numerically semistable for $w'$ generic.  We fix $k\geq 12$. For $\{\alpha,\beta,\gamma\}=\{1,2,3\}$, let $S_{\alpha \beta\gamma}$ denote the $\mathbb C$-span of $e_{\alpha}^ae_{\beta}^{2k-a}\otimes e_{\gamma}^*$ for $0\leq a\leq k-1$. We introduce 6 linear functionals:
\begin{equation*}
    \begin{aligned}
         &l_1\left(a_1 L_1+a_2 L_2+a_3 L_3\right)=a_2+ a_3-2 a_1,\ f_1\left(a_1 L_1+a_2 L_2+a_3 L_3\right)=a_2-a_3\\
         &l_2\left(a_1 L_1+a_2 L_2+a_3 L_3\right)=a_1+ a_3-2 a_2,\ f_2\left(a_1 L_1+a_2 L_2+a_3 L_3\right)=a_1-a_3\\
        &l_3\left(a_1 L_1+a_2 L_2+a_3 L_3\right)=a_1+ a_2-2 a_3,\ f_3\left(a_1 L_1+a_2 L_2+a_3 L_3\right)=a_1-a_2\\
    \end{aligned}
\end{equation*}
Then we define $S_{\alpha\beta\gamma}'$ for $\{\alpha,\beta,\gamma\}=\{1,2,3\}$ by
\begin{equation*}
\begin{aligned}
    &S_{123}'= \bigoplus_{l_3(\sigma)\geq 2k-1;f_3(\sigma)\leq -2}W_{\sigma},\  S_{213}'= \bigoplus_{l_3(\sigma)\geq 2k-1;f_3(\sigma)\geq 2}W_{\sigma}\\
    &S_{132}'= \bigoplus_{l_2(\sigma)\geq 2k-1;f_2(\sigma)\leq -2}W_{\sigma},\  S_{312}'= \bigoplus_{l_2(\sigma)> 2k-1;f_2(\sigma)\geq 2}W_{\sigma}\\
    &S_{231}'= \bigoplus_{l_1(\sigma)\geq 2k-1;f_1(\sigma)\leq -2}W_{\sigma},\  S_{312}'= \bigoplus_{l_2(\sigma)\geq 2k-1;f_1(\sigma)\geq 2}W_{\sigma}.\\
\end{aligned}
\end{equation*}

For $\{\alpha, \beta, \gamma\}=\{1, 2, 3\}$ we denote by $\Pi'_{\alpha\beta\gamma}$ the natural projection map from $W$ to $S'_{\alpha\beta\gamma}$, and given $A\in SL(3,\C)$ we denote by  $\mathcal A'_{\alpha\beta\gamma}(A)$ the affine subspace of $W'$ which consists of all elements $w'\in W'$ such that $\Pi'_{\alpha\beta\gamma}(A.(w_0+w'))=0$.

\begin{lemma}\label{step1}
	 For any $\{\alpha,\beta,\gamma\}=\{1,2,3\}$ and any fixed $A\in SL(3,\C)$, $\mathcal A'_{\alpha\beta\gamma}(A)$ has codimensin at least 9 in $W'$. 

\end{lemma}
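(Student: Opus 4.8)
The plan is to reduce Lemma~\ref{step1} to a rank lower bound for a single linear map, and then to establish that bound by combining the $SL(3,\C)$-action with the explicit weight bookkeeping of $\Gamma_{2k,1}$.

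\emph{Step 1 (an affine-fiber reformulation).} For fixed $A\in SL(3,\C)$ the assignment $w'\mapsto\Pi'_{\alpha\beta\gamma}(A.(w_0+w'))$ is affine from $W'$ to $S'_{\alpha\beta\gamma}$, with linear part $L_A:=\Pi'_{\alpha\beta\gamma}\circ A|_{W'}$. Hence $\mathcal A'_{\alpha\beta\gamma}(A)$ is the fiber of this affine map over $0$: it is either empty — and then there is nothing to prove — or an affine subspace of $W'$ of codimension exactly $\operatorname{rank}(L_A)$. Since $w_0\notin W'$ (its weight component at $(2k-1)L_1$ does not lie in $W'$), the space $\widehat W:=W'\oplus\C w_0$ has dimension $\dim W'+1$; and \emph{when the fiber is nonempty} one has $\Pi'_{\alpha\beta\gamma}(A.w_0)\in\operatorname{im}(L_A)$, so $\operatorname{rank}(L_A)=\dim\Pi'_{\alpha\beta\gamma}(A.\widehat W)=\dim\widehat W-\dim\big(A.\widehat W\cap\overline{S}_{\alpha\beta\gamma}\big)$, where $\overline{S}_{\alpha\beta\gamma}:=\ker\Pi'_{\alpha\beta\gamma}$ is the span of the weight spaces of $W$ absent from $S'_{\alpha\beta\gamma}$. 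Thus it suffices to prove: \emph{if $\mathcal A'_{\alpha\beta\gamma}(A)\neq\emptyset$ then $\dim\big(A.\widehat W\cap\overline{S}_{\alpha\beta\gamma}\big)\le\dim W'-8$.}

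\emph{Step 2 (removing the torus and the Weyl group).} Write $A=u_1\,t\,\dot\tau\,u_2$ via a Bruhat decomposition, with $t\in T_0$, $\dot\tau$ a representative of a permutation $\tau\in S_3$, and $u_1,u_2$ unipotent. The factor $t$ only rescales each weight space and $\dot\tau$ permutes them, so neither affects any of the dimensions above: $\dot\tau$ simply replaces $\overline{S}_{\alpha\beta\gamma}$ by the coordinate subspace $\dot\tau^{-1}.\overline{S}_{\alpha\beta\gamma}$, and $A$ is reduced to the pair of unipotents $(u_1,u_2)$. The remaining content is combinatorial: a product of root subgroups moves the weights of $\Gamma_{2k,1}$ by integer shifts along the root directions — $E_{12},E_{23},E_{13}$ and their opposites change $l_3,l_1,l_2$ by fixed amounts — and one tracks how this acts on $W'$ (whose top $l_3$-level is $2k-7$), on $w_0$ (whose nonzero weights sit at the extremal levels $l_3=2k-1$ bounding $S'_{123}$ and $S'_{213}$ from below, and symmetrically for the other pairs), and on each $S'_{\alpha\beta\gamma}$. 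The numerology matters here: the weight polytope of $\Gamma_{2k,1}$ is a hexagon with multiplicities $1$ on the boundary and $2$ just inside, so each $S'_{\alpha\beta\gamma}$ has dimension of order $k$, in particular $\gg9$ for $k\ge12$, while $\dim W'$ is of order $k^2$.

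\emph{Step 3 (uniformity over $A$ — the main obstacle).} It remains to show: \emph{if $\mathcal A'_{\alpha\beta\gamma}(A)\neq\emptyset$ then $\operatorname{rank}(L_A)\ge9$.} There is no genericity available, so I would argue one Bruhat cell at a time: in a fixed cell one parametrizes $A$ by the finitely many entries of $u_1,u_2$, and uses that nonemptiness of the fiber means $w_0\in W'+A^{-1}\overline{S}_{\alpha\beta\gamma}$. The heart of the argument is then that the $S'_{\alpha\beta\gamma}$-components of $A.w_0$ — which by Step~2 sit ``further out'' (at $l_3=2k-1$) than anything $W'$ (reaching only $l_3=2k-7$) can be pushed to by the unipotent part — cannot be absorbed into $\Pi'_{\alpha\beta\gamma}(A.W')$ unless that image is already at least nine-dimensional. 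This is elementary linear algebra once the cell is fixed, but it is lengthy, and it is precisely where the separation of the levels ``$2k-7$'' and ``$2k-1$'' and the explicit shape of $w_0$ are really used; it is the step I would budget the most effort for. (The threshold $9$ is $\dim SL(3,\C)+1$: once each $\mathcal A'_{\alpha\beta\gamma}(A)$ has codimension $\ge9$, the union over the $8$-dimensional family of $A$'s is still a proper subvariety of $W'$, which is what will let $w'$ be taken generic in the proof of Proposition~\ref{main result}.)
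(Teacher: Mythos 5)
Your Step 1 reformulation is correct and is essentially the skeleton of the paper's argument: the codimension of the nonempty fiber equals the rank of the linear part $w'\mapsto \Pi'_{\alpha\beta\gamma}(A.w')$, and in the degenerate case where that rank is too small one must instead show the constant term $\Pi'_{\alpha\beta\gamma}(A.w_0)$ is not in the image, so the fiber is empty. The threshold $9=\dim SL(3,\C)+1$ is also identified for the right reason. But the lemma \emph{is} the rank bound, and your Step 3 explicitly defers it (``elementary linear algebra once the cell is fixed, but it is lengthy''). Nothing in the proposal actually exhibits nine independent vectors in the image, so the proof is not complete. The paper does this by decomposing $A=PUL$ with $P$ a permutation, $U$ unipotent upper triangular and $L$ lower triangular (note $L$ preserves $W'$ since lower-triangular elements do not increase $l_3$, so it can be absorbed as an automorphism of $W'$ --- a cleaner reduction than your Bruhat form $u_1 t\dot\tau u_2$, whose second unipotent does not preserve $W'$), and then computes explicitly: for instance, applying $U$ to the vectors $e_1^ie_3^{2k-i}\otimes e_2^*\in W'$ and extracting the coefficient of $e_1^je_2^{2k-j}\otimes e_3^*$ yields a triangular (or anti-triangular) coefficient matrix whenever $u_{23}\neq 0$ or $u_{13}\neq 0$, giving rank at least $k-2\geq 9$; only when $u_{13}=u_{23}=0$ does one fall back on showing $\Pi'_{123}(UL.w_0)\neq 0$, i.e.\ emptiness.

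There is also a substantive error in the picture guiding your Step 3. You assert that the unipotent part can push $W'$ only up to the level $l_3=2k-7$, so that the components of $A.w_0$ at $l_3=2k-1$ ``cannot be absorbed'' into $\Pi'_{\alpha\beta\gamma}(A.W')$. This is false: $E_{23}$ raises $l_3$ by $3$, so an upper-triangular unipotent with $u_{23}\neq 0$ sends vectors of $W'$ to vectors with large components in the outer layers $l_3\geq 2k-1$, i.e.\ in $S'_{123}$ and $S'_{213}$. Indeed this is the whole mechanism of the generic cases: the rank bound there has nothing to do with $w_0$ and comes entirely from the unipotent action on explicit vectors of $W'$; the vector $w_0$ only enters in the fully degenerate case to rule out solutions. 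As written, your heuristic would predict $\operatorname{rank}(L_A)=0$ for all $A$ and reduce everything to non-vanishing of the constant term, which is not how the lemma can be proved.
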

 \begin{proof}
 Given $A\in SL(3, \C)$, for simplicity of notation we denote $\mathcal A'_{\alpha\beta\gamma}\equiv\mathcal A'_{\alpha\beta\gamma}(A)$.	Recall that we can write  $A=PUL$, where $P$ is a permutation matrix, $U$ is a unipotent upper triangular matrix and $L$ is an invertible lower triangular matrix. Note that $L$ is an isomorphism from $W'$ to $W'$ and the action of  $P$ simply permutes between different $S_{\alpha\beta\gamma}'$. Without loss of generality, we may assume $P$ is the identity matrix.

Given $U$, we let $T_{\alpha\beta\gamma}(U)$ be the linear subspace of $S_{\alpha\beta\gamma}'$ spanned by $\Pi'_{\alpha\beta\gamma}(U.w')$ for $w'\in W'$.
Let us prove the claim first for $\mathcal A_{123}'$. We write the unipotent upper triangular matrix $U$ as $(u_{ij})$. There are three cases to consider. 

\textbf{Case 1.} $u_{23}\neq 0$.
 We consider vectors $e_1^ie_3^{2k-i}\otimes e_2^*\in W'$, $i=0,\cdots,2k-2$. Note that the coefficient of $e_1^je_2^{2k-j}\otimes e_3^*$ in $\Pi'_{123}(U.(e_1^ie_3^{2k-i}\otimes e_2^*))$ is given by
\begin{equation}
	-u_{23}\sum_{i_1+i_2=j}\binom{i}{i_1}\binom{2k-i}{i_2} u_{21}^{i-i_1}u_{13}^{i_2}u_{23}^{2k-i-i_2}.
\end{equation}
Because $u_{21}=0$, this projection is always 0 if $j<i$ and always nonzero when $j=i$. It then follows that $e_1^je_2^{2k-j}\otimes e_3^*\in  T_{123}(U)$ for $j=0,\cdots,k-1$. Therefore  $\operatorname{codim} \mathcal A'_{123}\geq \dim T_{123}(U)\geq k \geq 9$.

\textbf{Case 2.} $u_{23}=0$ and $u_{13}\neq 0$. We consider vectors $e_2^ie_3^{2k-i}\otimes e_1^*\in W'$, $i=0,\cdots, 2k-2$.  The coefficient of $e_1^je_2^{2k-j}\otimes e_3^*$  in $\Pi_{123}'(U.(e_2^ie_3^{2k-i}\otimes e_1^*))$  is given by
\begin{equation}
	-u_{13}\sum_{i_1+i_2=j}\binom{i}{i_1}\binom{2k-i}{i_2} u_{12}^{i_1}u_{13}^{i_2}u_{23}^{2k-i-i_2}.
\end{equation}Because $u_{23}=0$ and $u_{13}\neq 0$, this projection is always 0 if $j<2k-i$ and always nonzero when $j=2k-i$. It follows that $\dim T_{123}(U)\geq k-2$, which again gives the conclusion.

\textbf{Case 3.} $u_{23}=u_{13}=0$. In this case it is clear that $\Pi_{123}'(U.w')=0$ for all $w'\in W'$. We claim that $\Pi'_{123}(U.L.w_0)\neq 0$, which then implies that $\dim \mathcal A'_{123}=0$.
Suppose $UL=(d_{ij})$, then by the assumptions on $U$ and $L$, we know that $d_{13}=d_{23}=0$ and $d_{22}d_{33}\neq0$. For $j=0,\cdots,k-2$, the coefficient of $e_1^je_2^{2k-1-j}e_3\otimes e_3^*$ in  $U.L.w_0$ 
is given by 
\begin{equation*}
    \binom{2k-1}{j}(d_{12}^jd_{22}^{2k-1-j}+d_{11}^jd_{21}^{2k-1-j}).
\end{equation*} Because $d_{22}$ is nonzero, it is easy to show that the above numbers can not be zero simultaneously for $j=0,\cdots,k-2$.

The proof for $\mathcal A_{213}'$ is quite similar to that of $\mathcal A_{123}'$ and we omit the details. The proofs for the remaining four spaces, namely $\mathcal A_{312}', \mathcal A_{132}', \mathcal A_{231}', \mathcal A_{321}'$ are all easier and follow a similar pattern. Indeed, in each case, we can show that $\dim T_{\alpha\beta\gamma}(U)\geq k-2$. 
 Let us just do $\mathcal A_{312}'$ for example.  We then consider vectors $e_1^ie_3^{2k-i}\otimes e_2^*\in W'$, $i=0,\cdots,2k-2$ and note that the coefficient of $e_1^je_3^{2k-j}\otimes e_2^*$ in $\Pi_{312}'.U.(e_1^ie_3^{2k-i}\otimes e_2^*)$ is given by
 \begin{equation*}
 		\sum_{i_1+i_2=j}\binom{i}{i_1}\binom{2k-i}{i_2} u_{31}^{i-i_1}u_{13}^{i_2}.
 \end{equation*} Because $u_{31}=0$, this coefficient is always 0 if $j<i$ and always nonzero when $j=i$. Thus $\dim T_{312}(U)\geq k-2$.
 \end{proof}

Recall that $v=(e_2^*)^2(e_3^*)^{k-1}$ and $\mathcal N(v)$ denotes the weight polytope of $v$. Notice that the outmost two layers in the lattice points of the  polytope of $\mathcal N(V)$ consist of vectors of the form $-kL_{\alpha}-L_j$ for $ \alpha, j\in \{1,2,3\}$. 
For $\{\alpha, \beta, \gamma\}=\{1, 2, 3\}$,  we denote by $\Pi_{\alpha\beta\gamma}$ the natural projection map from $W$ to $S_{\alpha\beta\gamma}$,  and given $A\in SL(3,\C)$ we denote by $\mathcal A_{\alpha\beta\gamma}(A)$ the affine subspace of $W'$ which consists of all elements $w'\in W'$ such that $\Pi_{\alpha\beta\gamma}(A.(w_0+w'))=0$.  For $\alpha\in \{1, 2, 3\}$ we denote
$$Y_{\alpha}=\{A\in SL(3,\C):-kL_{\alpha}-L_j\in \mathcal {N}(A.v) \ \text{for some} \ j\}.$$

\begin{lemma}\label{step2}
  For any $\alpha\in \{1,2,3\}$ and $A\in Y_\alpha$, $\mathcal A_{\beta\gamma\alpha}(A)$ has codimension at least 9 for any $\beta, \gamma$ with $\{\alpha,\beta,\gamma\}=\{1,2,3\}$.
\end{lemma}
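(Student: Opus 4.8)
The plan is to reduce Lemma \ref{step2} to Lemma \ref{step1} by showing that membership in $Y_\alpha$ forces the relevant permutation factor of $A$ to be essentially trivial, so that the two affine subspaces $\mathcal A_{\beta\gamma\alpha}(A)$ and $\mathcal A'_{\beta\gamma\alpha}(A)$ coincide (or at least the former contains the latter, whence the codimension bound transfers). First I would write $A = PUL$ as in the proof of Lemma \ref{step1}, with $P$ a permutation matrix, $U$ unipotent upper triangular, $L$ invertible lower triangular. The key observation is that the condition $-kL_\alpha - L_j \in \mathcal N(A.v)$ for some $j$ is a very strong constraint: since $v = (e_2^*)^2(e_3^*)^{k-1}$ has weight $\eta' = -2L_2 - (k-1)L_3$, and the outermost two layers of $\mathcal N(V)$ consist precisely of the weights $-kL_\alpha - L_j$, the only way $A.v$ can have a weight vector this close to the boundary of $\mathcal N(V)$ is if $A.v$ itself lies (up to scaling) very near one of the extreme monomials. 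Concretely I would compute that $-kL_\alpha - L_j$ is a weight of $\mathcal N(A.v)$ only if the "$e_\alpha^*$-part" of the directions spanned by $A.e_2^*, A.e_3^*$ dominates in an appropriate sense — this pins down which rows/columns of $A$ can be large and ultimately constrains $P$.

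Second, having identified the admissible permutation matrices $P$ for a given $\alpha$, I would observe that the definition of $\mathcal A_{\beta\gamma\alpha}(A)$ uses the projection $\Pi_{\beta\gamma\alpha}$ onto $S_{\beta\gamma\alpha}$, which is the span of $e_\beta^a e_\gamma^{2k-a}\otimes e_\alpha^*$, while $\mathcal A'_{\beta\gamma\alpha}(A)$ in Lemma \ref{step1} uses $\Pi'_{\beta\gamma\alpha}$ onto the subspace $S'_{\beta\gamma\alpha}$ cut out by the linear functional inequalities. I want to check that for the permissible $P$, $S_{\beta\gamma\alpha}$ is contained in — or, for the purposes of the rank/codimension bound, plays the same role as — the space $S'_{\beta\gamma\alpha}$; indeed $S'_{\beta\gamma\alpha}$ was defined precisely as the span of weight spaces $W_\sigma$ with $l_i(\sigma)$ large and $f_i(\sigma)$ of the correct sign, and the monomials $e_\beta^a e_\gamma^{2k-a}\otimes e_\alpha^*$ with $0\le a\le k-1$ are exactly the weight vectors realizing those extremal inequalities. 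So $\Pi_{\beta\gamma\alpha}(A.(w_0+w')) = 0$ implies $\Pi'_{\beta\gamma\alpha}(A.(w_0+w')) = 0$ after accounting for the permutation, giving $\mathcal A_{\beta\gamma\alpha}(A) \subset \mathcal A'_{\beta\gamma\alpha}(P^{-1}A)$ or a similar inclusion, and then Lemma \ref{step1} yields $\operatorname{codim} \mathcal A_{\beta\gamma\alpha}(A) \ge \operatorname{codim} \mathcal A'_{\beta\gamma\alpha} \ge 9$.

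The main obstacle I anticipate is the first step: carefully verifying which permutations $P$ are compatible with $A \in Y_\alpha$, and in particular ruling out the possibility that a nontrivial unipotent part $U$ or lower-triangular part $L$ "rotates" $A.v$ enough that a boundary weight $-kL_\alpha - L_j$ appears even when the naive monomial analysis would forbid it. This requires understanding how $\mathcal N(A.v)$ behaves under the Bruhat-type factorization: the weight polytope $\mathcal N(g.v)$ generally grows under the unipotent directions (as emphasized in the $SL(2)$ discussion in Section 2), so one must argue that although $U$ and $L$ can push weights of $A.v$ further out, they cannot create a weight exactly on the penultimate layer $-kL_\alpha - L_j$ unless the configuration is already aligned with $e_\alpha^*$. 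I would handle this by a direct expansion of $(PUL).v$ in the monomial basis and tracking the $l_\alpha$- and $f$-functionals on the weights that appear, showing the extremal weight is achieved only for the claimed $P$. Once this is pinned down, the transfer to Lemma \ref{step1} is essentially bookkeeping about the definitions of $S_{\alpha\beta\gamma}$ versus $S'_{\alpha\beta\gamma}$.
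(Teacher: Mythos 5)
Your proposed reduction of Lemma \ref{step2} to Lemma \ref{step1} does not work, for two independent reasons. First, the hypothesis $A\in Y_\alpha$ does not constrain the permutation factor $P$ in a decomposition $A=PUL$: writing $A.v=(\sum_i b_{i2}e_i^*)^2(\sum_i b_{i3}e_i^*)^{k-1}$, the condition $-kL_\alpha-L_j\in\mathcal N(A.v)$ forces a monomial with $e_\alpha^*$-degree at least $k$ to appear, and since only $k-1$ of the $k+1$ linear factors are copies of $A.e_3^*$, all this yields is $b_{\alpha 2}\neq 0$. This is an open condition; a generic $A$ lies in $Y_1\cap Y_2\cap Y_3$ simultaneously, so $Y_\alpha$ cannot pin down a Bruhat cell or make $P$ ``essentially trivial.'' Second, and more fatally, the identification of $S_{\beta\gamma\alpha}$ with $S'_{\beta\gamma\alpha}$ is false: one checks that $S_{123}\subsetneq S'_{123}$ strictly (for instance the $2$-dimensional weight spaces $W_\sigma$ with $\sigma=(a-1)L_1+(2k-a)L_2$, $a\le k-1$, satisfy $l_3(\sigma)=2k-1$ and $f_3(\sigma)\le -2$, hence lie in $S'_{123}$, but contain no monomial of the form $e_1^ae_2^{2k-a}\otimes e_3^*$). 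Consequently the implication runs the wrong way: $\Pi'_{\beta\gamma\alpha}(x)=0$ implies $\Pi_{\beta\gamma\alpha}(x)=0$, so $\mathcal A'_{\beta\gamma\alpha}(A)\subseteq\mathcal A_{\beta\gamma\alpha}(A)$, which gives $\operatorname{codim}\mathcal A_{\beta\gamma\alpha}(A)\le\operatorname{codim}\mathcal A'_{\beta\gamma\alpha}(A)$ --- useless for the desired lower bound. The two families of spaces also play different roles in the proof of Proposition \ref{main result}: the primed spaces guarantee that $\mathcal N(A.w)$ contains all lattice points of $\mathcal N(V)$ except the outermost layer, while the unprimed $S_{\beta\gamma\alpha}$ are what force the vertex $-(k+1)L_\alpha$ itself into $\mathcal N(A.w)$ when $A\in Y_\alpha$; conflating them would also break the final dimension count.

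What is actually needed, and what the paper does, is a fresh rank computation: after extracting $b_{\alpha2}\neq 0$ from $A\in Y_\alpha$, one tests the linear map $w'\mapsto\Pi_{\beta\gamma\alpha}(A.w')$ on the vectors $\xi_i=e_1^ie_3^{2k-i}\otimes e_2^*$, writes out the coefficient matrix $(c_{ij})$ explicitly in terms of the entries $a_{ij}$, and shows it has rank at least $k-2\ge 9$ by a case analysis (triangular structure when $a_{21}=0$ or $a_{23}=0$, and a Vandermonde determinant in the generic subcase). Your instinct to exploit $Y_\alpha$ is in the right spirit, but its only output is the nonvanishing of one entry of $(A^t)^{-1}$, after which the argument must be carried out directly rather than borrowed from Lemma \ref{step1}.
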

\begin{proof} It suffices to show that the linear map $W'\rightarrow S_{\beta\gamma\alpha}$, which sends $w'$ to $\Pi_{\alpha\beta\gamma}(A.w')$  has rank at least 9 for any $A\in Y_\alpha$.  For $i=0, \cdots, 2k-2$, we denote $\xi_i\equiv e_1^ie_3^{2k-i}\otimes e_2^*\in W'$.
It is easy to see that if $A\in Y_{\alpha}$ then $b_{\alpha 2}\neq0$. Here we recall that the matrix $(b_{ij})$ is the transpose inverse of $A$.

\

\textbf{Case 1.} $b_{32}\neq 0$. We need to consider $\Pi_{123}$ and $\Pi_{213}$.
By definition, we have

$$\Pi_{123}(A.\xi_i)=\sum_{j=0}^{k-1} c_{ij}e_1^je_2^{2k-j}\otimes e_3^*, $$
where 
$$c_{ij}=b_{32}\sum_{i_1=0}^j \binom{i}{i_1}\binom{2k-i}{j-i_1}a_{11}^{i_1}a_{21}^{i-i_1}a_{13}^{j-i_1}a_{23}^{2k-i-j+i_1}.$$

Notice $b_{32}= a_{21}a_{13}-a_{11}a_{23}\neq 0$. So there are a few subcases 

 \begin{itemize}
     \item[]  \textbf{Subcase 1.} $a_{21}= 0$, then $a_{11}a_{23}\neq 0$. The only non-zero term is when $i_1=i$. We get
    $$c_{ij}=0, \text{ if } j<i \text{ and } c_{ii}=b_{32}\binom{2k-i}{j-i}a_{11}^ia_{23}^{2k-i}\neq 0.$$
   It follows that the matrix $(c_{ij})$ has rank at least $k$. 
   \item[] \textbf{Subcase 2.} $a_{23}=0$, then $a_{21}a_{13}\neq 0$. The only non-zero term is when $i_1=i+j-2k$. We get  $c_{ij}=0$ for $j<2k-i$ and $c_{i, 2k-i}\neq 0$. Again the rank of $(c_{ij})$ is at least $k-2$. 
    \item []  \textbf{Subcase 3.} $a_{21}a_{23}\neq 0$,  then we can write 
$$\tilde c_{ij}\equiv c_{ij}b_{32}^{-1} a_{21}^{-i}a_{23}^{-2k+i}=\sum_{i_1=0}^j \binom{i}{i_1}\binom{2k-i}{j-i_1}x^{i_1}y^{j-i_1},$$
where $x=a_{11}a_{21}^{-1}$ and $y=a_{13}a_{23}^{-1}$. Notice that $x\neq y$. When $j=0$ we have 
$\tilde c_{i0}=1$ for all $i$. When $j=1$ we have 
$$\tilde c_{i1}=(2k-i)y+ix. $$
For general $j$ we have 
$$\tilde c_{ij}=\sum_{l=0}^{j}p_{ij,l}(x,y)i^l$$
where $p_{ij,l}(x,y)$ is a polynomial in $x,y$, and 
$$p_{ij,j}(x,y)=\sum_{i_1=0}^j \frac{(-1)^{j-i_1}}{i_1!(j-i_1)!}x^{i_1}y^{j-i_1}=\frac{1}{j!}(x-y)^j.$$
Using the Vandemonde determinant it follows that the matrix $\det(\tilde c_{ij})_{0\leq i, j\leq k-1}\neq 0$. So the rank of the matrix $(c_{ij})$, which is of size $(2k-1)\times k$, is $k$.
 \end{itemize}
    
\

Similarly one can get the conclusion for $\Pi_{213}$.

\

\textbf{Case 2.}  $b_{22}\neq 0$. In this case we need to consider $\Pi_{132}$ and $\Pi_{312}$. 
We have 

$$\Pi_{132}(A.\xi_i)=b_{22}\sum_{j=0}^{k-1}\sum_{i_1=0}^j \binom{i}{i_1}\binom{2k-i}{j-i_1}a_{11}^{i_1}a_{31}^{i-i_1}a_{13}^{j-i_1}a_{33}^{2k-i+j-i_1}e_1^je_3^{2k-j}\otimes e_2^*$$
This is similar to \textbf{Case 1}. 

\

\textbf{Case 3.}  $b_{12}\neq 0$.  In this case we need to consider $\Pi_{231}$ and $\Pi_{321}$. 
We have

$$\Pi_{231}(A.\xi_i)=b_{12}\sum_{j=0}^{k-1}\sum_{i_1=0}^j \binom{i}{i_1}\binom{2k-i}{j-i_1}a_{21}^{i_1}a_{31}^{i-i_1}a_{23}^{j-i_1}a_{33}^{2k-i+j-i_1}e_2^{j}e_3^{2k-j}\otimes e_1^*$$
Again this is similar to \textbf{Case 1}. 
\end{proof}

We now prove  Proposition \ref{main result} using a dimension argument. Consider the algebraic subset of $SL(3,\mathbb C)\times W'$ defined by
\begin{equation}
  X_{\alpha\beta\gamma}= \{(A,w'):\Pi_{\alpha\beta\gamma}'(A.(w_0+w'))=0\}.
\end{equation}
Let $\pi_1$ denote the projection to $SL(3,\C)$ and $\pi_2$ denote the projection to $W'$. Since $X_{\alpha\beta\gamma} $ is a finite union of affine varieties (irreducible algebraic sets) and by Lemma \ref{step1} every fiber of $\pi_1$ has dimension at most $\dim W'-9$, then we know \cite[I.8.Theorem 2]{Mu}
\begin{equation}
    \dim X_{\alpha\beta\gamma}< \dim W'.
\end{equation}
In particular, $\dim \pi_2(X_{\alpha\beta\gamma})< \dim W'$. Similarly, by Lemma \ref{step2} one can show that the algebraic subset of $Y_{\alpha}\times W'$ defined by
\begin{equation*}
    Z_{\beta\gamma\alpha}=\{(A,w'): \Pi_{\beta\gamma\alpha}(A.(w_0+w'))=0, A\in Y_{\alpha}\}
\end{equation*} also satisfies 
\begin{equation}
    \dim Z_{\alpha\beta\gamma}< \dim W'.
\end{equation}Note that $Z_{\alpha\beta\gamma}$ is an algebraic set because $Y_{\alpha}$ is a complement of zeroes of polynomials and clearly $\Pi_{\beta\gamma\alpha}(A.(w_0+w'))=0$ is the intersection of zeroes of polynomials. Now for a vector $w'\in W'\setminus (\bigcup_{\alpha, \beta, \gamma} \pi_2(X_{\alpha\beta\gamma})\cup \pi_2(Z_{\alpha\beta\gamma}))$, we claim that the pair $(v,w\equiv w_0+w')$ is numerically semistable. To see this, given $A\in SL(3, \C)$ by our definition of $S'_{\alpha\beta\gamma}$ and choice of $w'$ we know that $\mathcal N(A.w)$ always contains the lattice points in $\mathcal N(V)$ except the  outmost layer. So if $A\notin Y_\alpha$ then  $\mathcal N(A.w)$ strictly contains $\mathcal N(A.v)$. On the other hand, if for some $\alpha$ it holds that $A\in Y_\alpha$, then $-k L_\alpha-L_j\in \mathcal N(A.v)$ for some $j$; in this case by our definition of $S_{\beta\gamma\alpha}$ and choice of $w'$ we see that $-(k+1)L_\alpha\in \mathcal N(A.w)$. Thus  $\mathcal N(A.w)$ always contains $\mathcal N(A.v)$.

This finishes the proof of Proposition \ref{main result}.

\

As a final remark, we mention that in the above discussion, for generic $w'\in W'$ we have indeed that the pair $(v, w_0+w')$ is numerically \emph{stable} in the sense of Paul \cite{Paul09}. We briefly recall the latter notion here. Let $M_3$ denote the direct sum of $3$-copies of the standard representation $U$ and $I=e_1\oplus e_2\oplus e_3\in M_3$. Let $V$ be a representation of $G$. We define the degree of $V$ by
\begin{equation*}
    \operatorname{deg}(V):=\min \left\{d \in \mathbb{Z}_{>0} \mid \mathcal{N}(V) \subseteq d \mathcal{N}(M_3) \right\}
\end{equation*}
\begin{definition}[\cite{Paul09}]
    A pair $(v, w)$ is (numerically) stable if and only if  there is a positive integer $m$ such that
$(I^{\otimes \operatorname{deg}(V)}\otimes v^{\otimes m} , w^{\otimes (m+1)})$ is (numerically) semistable.
\end{definition}
It is easy to see that (numerical) stability implies (numerical) semistability. In our case, we observe that $\deg(V)=2k+2$ and the weight polytope $\mathcal N(I)=\mathcal{N}(M_3)$ is the equilateral triangle with vertices given by $L_j(j=1, 2,3).$ Then to see the numerical stability of $(v, w_0+w')$, one simply notices that Lemma \ref{step1} and Lemma \ref{step2} imply that whenever $\mathcal N(A.v)$ touches $\mathcal N(A.w)$, the touching point must be some vertex (say $P$) of $\mathcal N(V)$ and there is a segment $S$ in the boundary of $(2k+2)\mathcal N(M_3)$ such that $P$ is in the interior of $S$ and $S$ is contained in $\mathcal N(A.w)$.

\bibliographystyle{plain}
\bibliography{ref.bib}

\end{document}